\title{Frobenius induction for algebras}
\author{
        TIBERIU COCONE\c T \\
                \emph{Babe\c s-Bolyai University }\\
\emph{Faculty of Economics and Business Administration}\\
\emph{Str. Teodor Mihali, nr.58-60 }\\
\emph{ 400591 Cluj-Napoca, Romania}\\
\emph{tiberiu.coconet@math.ubbcluj.ro}
            \and
            ANDREI MARCUS\\
        \emph{Babe\c s Bolyai University}\\
        \emph{Faculty of Mathematics and Computer Science}\\
        \emph{ Str. Kog\u alniceanu  1, 400084}\\
         \emph{Cluj-Napoca, Romania}\\
 \emph{marcus@math.ubbcluj.ro}
 \and
        CONSTANTIN-COSMIN TODEA\\
        \emph{Technical University of Cluj-Napoca}\\
        \emph{Department of Mathematics, Str. G. Baritiu 25}\\
 \emph{Cluj-Napoca 400027, Romania}\\
 \emph{Constantin.Todea@math.utcluj.ro}
}
\date{\today}

\documentclass[10pt]{article}
\usepackage{graphicx}
\usepackage{mathptmx}      

\usepackage{amssymb,amsfonts,amsmath,amscd,latexsym}
\usepackage[english]{babel}
\usepackage{geometry}
\usepackage{latexsym}
\usepackage{amssymb}
\usepackage[all]{xy}
\usepackage{amssymb}
\usepackage{latexsym}
\usepackage{graphicx}
\usepackage{srcltx}
\newcommand{\Hom}{\mathrm{Hom}}
\newcommand{\Id}{\mathrm{Id}}

\newcommand{\End}{\mathrm{End}}
\newcommand{\Ind}{\mathrm{Ind}}
\newcommand{\IndP}{\mathrm{IndP}}
\newcommand{\IndT}{\mathrm{IndT}}
\newcommand{\Aut}{\mathrm{Aut}}

\newcommand{\Mo}{\textrm{-}\mathrm{Mod}}

\newcommand{\Ker}{\operatorname{Ker}}
\newcommand{\Ima}{\mathrm{Im}}
\newcommand{\Sum}{\displaystyle\sum}
\usepackage{amsthm}
\newtheorem{thm}{Theorem}[section]
\newtheorem{theorem}[thm]{Theorem}
\newtheorem{corollary}[thm]{Corollary}
\newtheorem{lemma}[thm]{Lemma}
\newtheorem{proposition}[thm]{Proposition}
\newtheorem{remark}[thm]{Remark}
\theoremstyle{definition}
\newtheorem{definition}[thm]{Definition}

\newtheorem{example}[thm]{Example}

\begin{document}
\maketitle

\begin{abstract}
Let $B\to A$ be a homomorphism of Hopf algebras and let $C$ be an algebra. We consider the induction from $B$ to $A$ of $C$ in two cases: when $C$ is a $B$-interior algebra and when $C$ is a $B$-module algebra. Our main results establish the connection between the two inductions. The inspiration comes from finite group representation theory, and some constructions work in even more general contexts. 
\end{abstract}

\textbf{MSC}  16S40, 16S50, 16T05, 16S35, 20C05, 19A22.

\textbf{Keywords} Hopf algebra, \and Frobenius extension, \and induction, \and augmented algebra, \and Hopf module algebra, \and smash product, \and duality.
\section{Introduction}\label{intro}

In finite group representation theory a notion of Frobenius induction for algebras was introduced by Llu\'is Puig in \cite[Definition 3.3]{Pu}. If $H$ is a subgroup of the finite group $G$, $k$ is a field and $kH\to C$ is a homomorphism of algebras ($C$ is called a $kH$-interior algebra), then
\[ \Ind_H^G C:=kG\otimes_{kH}C\otimes_{kH}kG\]
is naturally a $kG$-interior algebra. This construction has many important uses, and it is strongly related to the classical Frobenius induction for modules. If $V$ is a $kH$-module, then there is a natural isomorphism 
\[\Ind_H^G(\End_k(V))\simeq \End_k(kG\otimes_{kH}V)\] of $kG$-interior algebras. Puig also introduced in \cite{PuBook} a non-injective version, so $\Ind_H^GC$ may be defined for any group homomorphism $H\to G$, and has a similar property. Linckelmann showed in \cite{LiInd} that Puig's induction may be generalized as follows. If $A$ and $B$ are $k$-algebras, $M$ is an $(A,B)$-bimodule and $C$ is a $B$-interior algebra, then, by definition
\[\Ind_M(C):=\End_{C^\mathrm{op}}(M\otimes_B C),\] which is naturally an $A$-interior algebra.

On the other hand, if $B$ is a $k$-algebra acted upon by the subgroup $G$, Turull defined in \cite{Tur} the induced $G$-algebra 
\[\Ind_H^GC:=kG\otimes_{kH}C\]
by regarding $C$ as a $kH$-module via the given $H$-action, with multiplication 
\[(g_1\otimes c_1)(g_1\otimes c_1)=\begin{cases} g_1\otimes a_1a_2, & \textrm{if } g_1=g_2 \\ 0, & \textrm{if } g_1H\neq g_2H, \end{cases}\]
and $G$-action \[{}^{g_2}(g_1\otimes c)=g_2g_1\otimes c_1,\]
for all $c_1,c_2\in C$ and $g_1,g_2\in G$.

In this paper we are concerned with the following two problems. The first is to give conditions on the $(A,B)$-bimodule $M$ such that the induced algebra $\Ind_M(C)$ can be expressed in two ways: as an endomorphism algebra, and as a tensor product. The second problem is to find the relationship between the two types of induction. 

In Section 2 we give a positive answer to the first question  when $A$ is $\beta$-Frobenius extension of $B$ as in \cite{FM} and \cite{D}. In Section 3 we generalize the surjective version of Puig's induction to the case of a homomorphism $B\to A$ of augmented algebras with some additional conditions. In Section 4 we define Turull's induction in the situation when $B$ is a Hopf subalgebra of the Hopf algebra $A$ and $C$ is a $B$-module algebra. We also define an surjective version  of Turull's induction through a homomorphism $B\to \bar B$ of Hopf algebras. 

Our main results are given in Section 5, where we start with a Hopf subalgebra $B$ of $A$, and a $B$-module algebra $C$. Then the smash product $C\# B$ is a $B$-interior algebra, so we may construct Puig's induction from $B$ to $A$ of  $C\# B$, and also the smash product between the Turull's induced algebra $\Ind_B^A C$ and $A$. We prove in Theorem~\ref{thminjpuigturull} below that, briefly speaking, induction commutes with the construction of the smash product, and this may also be regarded as a duality theorem. In fact, a particular case of Theorem~\ref{thminjpuigturull} is related to the finite dimensional versions of some results of \cite[\S 9.4]{MoBook}. Finally, Theorem~\ref{thmsurjPuTur} is the surjective counterpart of Theorem~\ref{thminjpuigturull}.

One might ask which are the applications of these constructions and results. First, we can now generalize other results regarding induction of algebras of Puig and Turull from groups to Hopf algebras. This is the objective of a possible follow-up article. Also, notice that by the methods from this article we can induce new $k$-algebras starting from a given $k$-algebra; induction is usually used in module categories. Moreover these induced algebras applied to Hopf module algebras preserves smash products.

Our notations and assumptions are standard. If $k$  a commutative ring and $A$ is a $k$-algebra, we denote by $A\Mo$,~ $A^\mathrm{op}\Mo$ the category of (unitary) left $A$-modules, respectively right $A$-modules. Homomorphisms and subalgebras of $k$-algebras are unitary. For $n$ a positive integer and a $k$-algebra $A$ we denote by $\mathcal{M}_n(A)$ the matrix algebra.
We follow \cite{MoBook} for notations and basic facts regarding Hopf algebras, and we recall in each section the needed definitions and results. 

\section{Injective induction and $\beta$-Frobenius extensions}\label{sec2}

In this section, let $k$ be a commutative ring and let $A,B$ be two $k$-algebras. A $B$-\emph{interior} $k$-algebra is a $k$-algebra $C$ for which there is a homomorphism
$\sigma:B\rightarrow C$  of $k$-algebras. In this case, we make the convention that $C$ is a $(B,B)$-bimodule through $\sigma$, that is
\[b_1\cdot c\cdot b_2=\sigma(b_1)c\sigma(b_2)\] for any $b_1,b_2\in B$ and $c\in C$. We denote this by ${}_BC_B,$ and implicitly, the action of $B$ is through $\sigma.$

If $M$ is an $(A,B)$-bimodule,  Linckelmann  defined in \cite{LiInd} the \emph{induced algebra}
\[\Ind_M(C):=\End_{C^\mathrm{op}}(M\otimes_B C),\]
which is an $A$-interior $k$-algebra with the structural homomorphism
\[A\longrightarrow \Ind_M(C)\] mapping $a\in A$ to the $C^\mathrm{op}$-endomorphism of $M\otimes_B C$ given by left multiplication with $a$ on $M\otimes_B C$. This  definition was introduced by Linckelmann in order to generalize Puig's induction, which was defined  for interior algebras given by group algebras. We recall these ideas in the following example.

\begin{example} \label{remPuiLi} Let $H$ be a subgroup of a finite group $G$, and let $C$ be a $kH$-interior $k$-algebra. Puig defined in \cite{Pu} the injective induction from $H$ to $G$ of $C$ as the $kG$-interior algebra
\[kG\otimes_{kH}C\otimes_{kH}kG,\] with the multiplication given by
$$(x_1\otimes c\otimes y_1)\cdot(x_2\otimes d\otimes y_2)=\left\{ \begin{array}{ll}x_1\otimes cy_1x_2d\otimes y_2,~~~\text{if}~y_1x_2\in H\\
0,~~~~~~~~~~~~~~~~~~~~~~~~~~~\text{if}~y_1x_2\notin H
\end{array} \right.,$$
where $x_1,y_1,x_2,y_2\in G, c,d\in C.$ The identity of this algebra is
$$\Sum_{g\in[G/H]}g\otimes1_C\otimes g^{-1},$$
where $[G/H]$ is a set of representatives of left cosets of $H$ in $G$. If we set $M=kG$ as $(kG,kH)$-bimodule, then we have an isomorphism of interior $kG$-algebras
\[\Ind_M(C)\cong kG\otimes_{kH}C\otimes_{kH}kG.\]
\end{example}

The objective of this section is to prove that the isomorphism from Example \ref{remPuiLi} is still true in the context of  a left $\beta$-Frobenius extension of
$k$-algebras $B\leq A$, where $\beta$ a $k$-algebra automorphism  of $B$. For this we recall some basic results and notations regarding left $\beta$-Frobenius extensions from \cite{FM}.

If $M\in B\Mo$, then ${}_{\beta}M$ denotes the left $\beta$-twisted $B$-module with underlying set $M$ and left action \[b\cdot m=\beta(b)m\] for any $b\in B$ and $m\in M$. One defines similarly the right $\beta$-twisted $B$-module $M_{\beta}$, and  the $\beta$-twisted $(B,B)$-bimodule ${}_{\beta}M_{\beta}$.

By \cite[Definition 1.1]{FM} (see also \cite{D}), the algebra extension $B\leq A$ is a \emph{left $\beta$-Frobenius extension} if $A$, as right $B$-module, is finitely generated and projective, and there is an isomorphism \[A\cong {}_{\beta}\Hom_B(A,B)\] of $(B,A)$-bimodules. Moreover, in this case, by \cite[Proposition 1.3]{FM}, there is a $(B,B)$-bimodule map $\varphi: A\rightarrow {}_{\beta}B$  and there are subsets
\[\{r_i\mid i\in\{1,\ldots,n\}\},\qquad \{l_i\mid i\in\{1,\ldots,n\}\}\] of $A$ (called \emph{dual bases}) such that

\begin{equation}\label{eqasumrili} a=\Sum_{i=1}^nr_i\varphi(l_ia)=\Sum_{i=1}^n(\beta^{-1}\circ\varphi)(ar_i)l_i
\end{equation}
for all $a\in A$.

We give now the main result of this section, which says that Puig's injective induction for $\beta$-Frobenius extensions and Linckelmann's generalization agree.

\begin{theorem}\label{thm22} Let $B\leq A$ be a left $\beta$-Frobenius extension of $k$-algebras, and let $C$ be a $B$-interior algebra with structural homomorphism $\sigma:B\rightarrow C$. Then  $A_{\beta}\otimes_BC\otimes_B A$ has an $A$-interior $k$-algebra such that we have an isomorphism
\[\Ind_{A_{\beta}}(C)\cong A_{\beta}\otimes_B C\otimes_B A\]
of $A$-interior $k$-algebras.
\end{theorem}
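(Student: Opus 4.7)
The plan is to construct an explicit $k$-linear bijection
\[ \Phi : A_{\beta} \otimes_B C \otimes_B A \longrightarrow \End_{C^{\mathrm{op}}}(A_{\beta} \otimes_B C) = \Ind_{A_{\beta}}(C), \]
transport the $A$-interior algebra structure of the target through it, and then read off an explicit multiplication on the left-hand side. The Frobenius map $\varphi : A \to {}_{\beta}B$ and the dual bases $\{r_i\},\{l_i\}$ from (\ref{eqasumrili}) are the two key ingredients.

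First I would set
\[ \Phi(a_1 \otimes c \otimes a_2)(a' \otimes c') = a_1 \otimes c\,\sigma(\beta^{-1}(\varphi(a_2 a')))\,c' \]
and verify the various well-definedness checks. On the codomain, the tensor relation $a'\beta(b) \otimes c' = a' \otimes \sigma(b) c'$ in $A_{\beta}\otimes_B C$ must be preserved; the twist $\beta^{-1}$ inside $\sigma$ is placed precisely so that the $\beta(b)$ picked up by right $B$-linearity of $\varphi$ cancels. On the domain, the two tensor relations $a_1\beta(b) \otimes c \otimes a_2 = a_1 \otimes \sigma(b)c \otimes a_2$ and $c\sigma(b) \otimes a_2 = c \otimes b a_2$ go through using the identity $\varphi(b a a') = \beta(b)\varphi(a a')$ together with $\sigma$ being an algebra homomorphism. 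An inverse candidate is
\[ \Psi(f) = \sum_{i=1}^n f(r_i \otimes 1_C) \otimes l_i, \]
and the two compositions reduce to the two forms of (\ref{eqasumrili}). For $\Psi\circ\Phi$, evaluating at $r_i\otimes 1_C$ and pushing $\beta^{-1}(\varphi(a_2 r_i)) \in B$ across the right tensor yields $a_1\otimes c\otimes\sum_i\beta^{-1}(\varphi(a_2 r_i))l_i = a_1\otimes c\otimes a_2$. For $\Phi\circ\Psi$, pulling $\sigma(\beta^{-1}(\varphi(l_i a')))$ from the $C$-slot into the $A_{\beta}$-slot picks up a $\beta$ and produces $r_i\varphi(l_i a')$; applying $C^{\mathrm{op}}$-linearity of $f$ and the first form of (\ref{eqasumrili}) then collapses the sum to $f(a'\otimes c')$.

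Once $\Phi$ is a $k$-linear bijection, I would transport the multiplication and the structural map. A direct computation of $\Phi(T_1)\circ\Phi(T_2)$ shows that the induced multiplication is
\[ (a_1\otimes c_1\otimes a'_1)(a_2\otimes c_2\otimes a'_2) = a_1 \otimes c_1\,\sigma(\beta^{-1}(\varphi(a'_1 a_2)))\,c_2 \otimes a'_2, \]
with identity element $\sum_i r_i \otimes 1_C \otimes l_i = \Psi(\Id)$ and structural homomorphism $a \mapsto \sum_i a r_i \otimes 1_C \otimes l_i$. The main technical hurdle throughout is the bookkeeping of the $\beta$-twists; once $\beta^{-1}$ is positioned as in the definition of $\Phi$, the dual-basis identities fit together cleanly and both compositions collapse as needed.
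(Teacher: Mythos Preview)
Your proposal is correct and is essentially the paper's proof: the map $\Phi$, its inverse $\Psi$, the resulting multiplication, identity element, and structural homomorphism all coincide verbatim with what the paper writes down. The only cosmetic difference is the order of exposition---you build the bijection first and transport the algebra structure, whereas the paper defines the multiplication on $A_{\beta}\otimes_B C\otimes_B A$ directly and then checks that the same map is an algebra isomorphism; in particular, your route spares you the separate well-definedness check for the multiplication, and you verify both compositions $\Psi\circ\Phi$ and $\Phi\circ\Psi$ where the paper only spells out one.
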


\begin{proof} 
Define the multiplication on $A_{\beta}\otimes_BC\otimes_B A$ by
$$(a_1\otimes c_1\otimes a_1')(a_2\otimes c_2\otimes a_2'):=a_1\otimes c_1(\sigma\circ\beta^{-1}\circ\varphi)(a_1'a_2)c_2\otimes a_2',$$
for $a_1,a_1',a_2,a_2'\in A, c_1,c_2\in C.$ We verify the associativity and the existence of the identity element; the other axioms are obvious. Let
$a_1,a_1',a_2,a_2',a_3,a_3'\in A$, and let $c_1,c_2,c_3\in C$; then
\begin{align*}
((a_1\otimes c_1\otimes a_1')&(a_2\otimes c_2\otimes a_2')(a_3\otimes c_3\otimes a_3')\\
&=(a_1\otimes c_1(\sigma\circ\beta^{-1}\circ\varphi)(a_1'a_2)c_2\otimes a_2')(a_3\otimes c_3\otimes a_3')\\&=a_1\otimes c_1(\sigma\circ\beta^{-1}\circ\varphi)(a_1'a_2)c_2 (\sigma\circ\beta^{-1}\circ\varphi)(a_2'a_3)c_3\otimes a_3' \\
&=(a_1\otimes c_1\otimes a_1')(a_2\otimes c_2(\sigma\circ\beta^{-1}\circ\varphi)(a_2'a_3)c_3\otimes a_3')\\
&=(a_1\otimes c_1\otimes a_1')\left((a_2\otimes c_2\otimes a_2')(a_3\otimes c_3\otimes a_3')\right).
\end{align*}
The identity element is $1_{A_{\beta}\otimes_B C\otimes_B A}=\Sum_{i=1}^n r_i\otimes 1_C\otimes l_i$, since we have
\begin{align*}
(a\otimes c\otimes a')(\Sum_{i=1}^n r_i\otimes 1_C\otimes l_i)&=\Sum_{i=1}^na\otimes c(\sigma\circ\beta^{-1}\circ\varphi)(a'r_i)1_C\otimes l_i\\
&=\Sum_{i=1}^na\otimes c(\sigma\circ\beta^{-1}\circ\varphi)(a'r_i)\sigma(1_B)\otimes l_i\\
&=\Sum_{i=1}^na\otimes c\otimes(\beta^{-1}\circ\varphi)(a'r_i)1_B l_i\\
&=a\otimes c\otimes \Sum_{i=1}^n(\beta^{-1}\circ\varphi)(a'r_i)l_i=a\otimes c\otimes a',
\end{align*}
where the last equality is true by (\ref{eqasumrili}).

The structural homomorphism of $A_{\beta}\otimes_B C\otimes_B A$ as an $A$-interior algebra is given by
\[\tau:A\rightarrow A_{\beta}\otimes_B C\otimes_B A, \qquad a\mapsto \Sum_{i=1}^nar_i\otimes 1_C\otimes l_i\]
Indeed, we have
\begin{align*}
\tau(a_1)\tau(a_2)&=\Sum_{i=1}^n\Sum_{j=1}^n(a_1r_i\otimes 1_C\otimes l_i)(a_2r_j\otimes 1_C\otimes l_j)\\
&=\Sum_{i=1}^n\Sum_{j=1}^na_1r_i\otimes \sigma(1_B)(\sigma\circ\beta^{-1}\circ\varphi)(l_ia_2r_j)\sigma(1_B)\otimes l_j\\
&=\Sum_{i=1}^n\Sum_{j=1}^na_1r_i\varphi(l_ia_2r_j)\otimes 1_C\otimes l_j\\
&=\Sum_{j=1}^n\left(a_1\Sum_{i=1}^nr_i\varphi(l_ia_2r_j)\otimes 1_C\otimes l_j\right)\\
&=\Sum_{j=1}^na_1a_2r_j\otimes 1_C\otimes l_j=\tau(a_1a_2),
\end{align*}
where the  equality in the last line holds again by (\ref{eqasumrili}).

Explicitly, the requested isomorphism is given by
\[\Psi:A_{\beta}\otimes_B C\otimes_BA\rightarrow\Ind_{A_{\beta}}(C), \qquad a\otimes c\otimes a'\mapsto\Psi(a\otimes c\otimes a')=\Psi_{a\otimes c\otimes a'},\]
where \[\Psi_{a\otimes c\otimes a'}(b\otimes d)=a\otimes c(\sigma\circ\beta^{-1}\circ\varphi)(a'b)d\]
for any $b\in A,d\in C$.

We first verify that $\Psi$ is a homomorphism of $k$-algebras; for this let $a_1,a_2,a_1',a_2',b\in A$ and $c_1,c_2,d\in C$; then we have
\begin{align*}\Psi\left((a_1\otimes c_1\otimes a_1')(a_2\otimes c_2\otimes a_2')\right) &=\Psi(a_1\otimes c_1(\sigma\circ\beta^{-1}\circ\varphi)(a_1'a_2)c_2\otimes a_2') \\
&=\Psi_{a_1\otimes c_1(\sigma\circ\beta^{-1}\circ\varphi)(a_1'a_2)c_2\otimes a_2'},
\end{align*}
where
\[\Psi_{a_1\otimes c_1(\sigma\circ\beta^{-1}\circ\varphi)(a_1'a_2)c_2\otimes a_2'}(b\otimes d)=a_1\otimes c_1(\sigma\circ\beta^{-1}\circ\varphi)(a_1'a_2)c_2 (\sigma\circ\beta^{-1}\circ\varphi)(a_2'b)d.\]
On the other hand, we have that
$$\Psi(a_1\otimes c_1\otimes a_1')\circ \Psi(a_2\otimes c_2\otimes a_2')=\Psi_{a_1\otimes c_1\otimes a_1'}\circ \Psi_{a_2\otimes c_2\otimes a_2'},$$ where
\begin{align*}
\Psi_{a_1\otimes c_1\otimes a_1'}(\Psi_{a_2\otimes c_2\otimes a_2'}(b\otimes d))&=\Psi_{a_1\otimes c_1\otimes a_1'} (a_2\otimes c_2 (\sigma\circ\beta^{-1}\circ\varphi)(a_2'b)d)\\
  &=a_1\otimes c_1(\sigma\circ\beta^{-1}\circ\varphi)(a_1'a_2)c_2(\sigma\circ\beta^{-1}\circ\varphi)(a_2'b)d.
\end{align*}

Clearly, $\Psi_{a\otimes c\otimes a'}$ is a homomorphism of $C^\mathrm{op}$-modules. Moreover, $\Psi$ is an homomorphism of $A$-interior algebras, since it is easy to verify the commutativity of the diagram
\begin{displaymath}
 \xymatrix{A\ar[r]^{\Id_A}\ar[d]^{\tau}&A\ar[d] \\ A_{\beta}\otimes_B C\otimes_B A\ar[r]^{\Psi}&\Ind_{A_{\beta}}(C) },
\end{displaymath}
where the right-hand side arrow is the structural homomorphism from Linckelmann's definition.
Its inverse is given by
$$\Psi^{-1}:\Ind_{A_{\beta}}(C)\rightarrow A_{\beta}\otimes_B C\otimes_B A, \qquad f\mapsto \Sum_{i=1}^nf(r_i\otimes 1_C)\otimes l_i.$$
Indeed, for each $i\in\{1,\ldots,n\}$ we may write
\[f(r_i\otimes 1_C)=\sum_{j\in J} m_{j,r_i}\otimes n_{j,r_i}\in A_{\beta}\otimes_BC,\]
where $m_{j,r_i}\in A,n_{j,r_i}\in C$ for any $j\in J$, where $J$ is a finite set of indices. Let $a\in A,$ and $c\in C$. Then we have
 \begin{align*}
(\Psi\circ\Psi^{-1})(f)(a\otimes c)&=\Sum_{i=1}^n\Sum_{j\in J}\Psi(m_{j,r_i}\otimes n_{j,r_i}\otimes l_i)(a\otimes c)\\
&=\Sum_{i=1}^n\Sum_{j\in J}m_{j,r_i}\otimes n_{j,r_i}(\sigma\circ\beta^{-1}\circ\varphi)(l_ia)c\\
&=\Sum_{i=1}^nf(r_i\otimes 1_C)(\sigma\circ\beta^{-1}\circ\varphi)(l_ia)c\\
&=\Sum_{i=1}^nf(r_i\otimes (\sigma\circ\beta^{-1}\circ\varphi)(l_ia)c)\\
&=f\left(\Sum_{i=1}^nr_i\varphi(l_ia)\otimes c\right)=f(a\otimes c),
\end{align*}
where the fourth equality holds since $(\sigma\circ\beta^{-1}\circ\varphi)(l_ia)c$ is in  $C$, while the sixth equality is true by (\ref{eqasumrili}).
\end{proof}

\section{Augmented algebras and the non-injective Puig induction}\label{sec2'}

The first aim of this section is to define a non-injective induction through a homomorphism of augmented algebras which generalizes Puig's non-injective induction \cite[Section 3]{PuBook} through a homomorphism of   group algebras. We will show that it coincides with Linckelmann's generalization discussed in Section \ref{sec2}, if we choose a suitable bimodule. The second aim is related to the context of Section \ref{sec4}, since Hopf algebras are augmented algebras.

Let $k$ be a commutative ring. Recall that a $k$-algebra $A$ is \emph{augmented} if there is a homomorphism \[\alpha_A:A\rightarrow k\]  of $k$-algebras; we denote it by
$(A,\alpha_A)$. In this case we can give to $k$ a structure of trivial left (right) $A$-module (and also of trivial $(A,A)$-bimodule) through $\alpha_A$. We denote these by
${}_{\alpha_A}k$, $k_{\alpha_A},$ respectively ${}_{\alpha_A}k_{\alpha_A}$.
A homomorphism $\phi:B\rightarrow A$ between two augmented $k$-algebras $(A,\alpha_A),(B,\alpha_B)$  is a homomorphism of $k$-algebras satisfying $\alpha_A\circ \phi=\alpha_B$.

\subsection{The surjective case.}

\begin{lemma}\label{lem31} Let $\phi:B\rightarrow A$ be a surjective homomorphism of augmented $k$-algebras. Assume that $K$ is a subalgebra of $B$ such that
\[\Ker \phi\leq (\Ker\alpha_B\cap K)B,\] and let $C$ be a interior $B$-algebra with structural homomorphism $\sigma:B\rightarrow C$. Then there is an isomorphism
$$A_{\phi}\otimes_B C\cong k_{\alpha_B}\otimes_K C$$
of $C^\mathrm{op}$-modules.
\end{lemma}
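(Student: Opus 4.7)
The plan is to realize both $A_\phi \otimes_B C$ and $k_{\alpha_B} \otimes_K C$ as quotients of $C$ by the same submodule, and then to check that the resulting identification is $C^{\mathrm{op}}$-linear. Since $\phi$ is surjective, the assignment $b\mapsto \phi(b)$ exhibits $A_\phi$ as $B/\Ker\phi$ in the category of right $B$-modules (with the twisted right action $a\cdot b=a\phi(b)$), and consequently tensoring over $B$ with ${}_BC$ yields a right $C$-module isomorphism
\[
A_\phi \otimes_B C \;\cong\; C\big/\sigma(\Ker\phi)\cdot C,
\]
sending $\phi(b)\otimes c$ to the class of $\sigma(b)c$. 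In the same way, because $1_B\in K$, the augmentation $\alpha_B|_K: K\to k$ is a surjection of right $K$-modules with kernel $K\cap \Ker\alpha_B$, and tensoring over $K$ with ${}_KC$ (via $\sigma|_K$) yields
\[
k_{\alpha_B} \otimes_K C \;\cong\; C\big/\sigma(K\cap \Ker\alpha_B)\cdot C.
\]

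It is thus enough to verify that the two submodules $\sigma(\Ker\phi)\cdot C$ and $\sigma(K\cap \Ker\alpha_B)\cdot C$ of $C$ coincide. The hypothesis $\Ker\phi\leq (K\cap \Ker\alpha_B)B$ directly yields
\[
\sigma(\Ker\phi)\cdot C \;\subseteq\; \sigma(K\cap\Ker\alpha_B)\cdot \sigma(B)\cdot C \;\subseteq\; \sigma(K\cap\Ker\alpha_B)\cdot C,
\]
using that $\sigma$ is unital so that $\sigma(B)\cdot C\subseteq C$. The reverse inclusion is the main technical point: one needs the companion containment $K\cap\Ker\alpha_B \subseteq \Ker\phi$ (equivalently, $\phi|_K$ factors through $\alpha_B|_K$), which combined with the stated hypothesis amounts to the equality $(K\cap \Ker\alpha_B)B=\Ker\phi$. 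This is precisely the natural situation in the Hopf-algebra applications that motivate the lemma, in which $\phi$ is a quotient by a normal Hopf subalgebra and $\Ker\phi$ is generated as a right ideal by $K\cap\Ker\alpha_B$.

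Once the equality of submodules is in place, concatenating the two quotient descriptions yields the claimed isomorphism of $C^{\mathrm{op}}$-modules
\[
\phi(b)\otimes c \;\longmapsto\; 1\otimes \sigma(b)c,
\]
with inverse $1\otimes c\mapsto 1_A\otimes c$. The $C^{\mathrm{op}}$-linearity is automatic because the right $C$-action operates on the last tensor factor on both sides and is inherited by the corresponding quotients of $C$. The main obstacle is therefore the reverse inclusion in the comparison of submodules of $C$, since the hypothesis as stated directly delivers only one of the two containments.
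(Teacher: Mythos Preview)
Your approach is exactly the paper's: identify $A_\phi\otimes_B C$ with $C/\sigma(\Ker\phi)C$ and $k_{\alpha_B}\otimes_K C$ with $C/\sigma(K\cap\Ker\alpha_B)C$, and observe that the stated hypothesis $\Ker\phi\le(K\cap\Ker\alpha_B)B$ yields the inclusion $\sigma(\Ker\phi)C\subseteq\sigma(K\cap\Ker\alpha_B)C$, hence a well-defined surjection $\bar b\otimes c\mapsto 1\otimes\sigma(b)c$. The paper then asserts that $1\otimes c\mapsto\bar 1\otimes c$ is an inverse and calls this ``easy to check.''

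You are right to flag the reverse inclusion as the obstacle. Well-definedness of the inverse requires, for every $x\in K$, that $\bar x\otimes c=\overline{\alpha_B(x)}\otimes c$ in $B/\Ker\phi\otimes_B C$, equivalently $x-\alpha_B(x)1_B\in\Ker\phi$; since $x-\alpha_B(x)1_B\in K\cap\Ker\alpha_B$, this amounts to $K\cap\Ker\alpha_B\subseteq\Ker\phi$. That containment is \emph{not} implied by the hypothesis of the lemma as written, and without it the statement is false: take $B=K=C=k[x]/(x^2)$ with $\alpha_B(x)=0$, $A=B$, $\phi=\mathrm{id}$; then $\Ker\phi=0\le(x)=(K\cap\Ker\alpha_B)B$, yet $A_\phi\otimes_B C\cong B$ while $k_{\alpha_B}\otimes_K C\cong k$.

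So the gap you point to is real, and it lies in the paper's statement (and in the phrase ``easy to check'') rather than in your argument. Note that the very next result, Proposition~3.2(b), and Definition~3.3 both impose the two-sided condition $K\cap\Ker\alpha_B\le\Ker\phi\le(K\cap\Ker\alpha_B)B$; the lemma should carry the same pair of hypotheses, and with it your proof (which is the paper's proof, made explicit) goes through.
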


\begin{proof} From the isomorphism $B/\Ker\phi\cong A$ of $k$-algebras we get that $$A_{\phi}\cong B/\Ker\phi$$ as $B^\mathrm{op}$-modules. It follows that we have an isomorphism
$$A_{\phi}\otimes_B C\cong B/\Ker \phi\otimes_B C$$
of $C^\mathrm{op}$-modules. Consider the map
$$\psi:B/\Ker\phi\otimes_B C\rightarrow k_{\alpha_B}\otimes_K C, \qquad \psi(\bar{b}\otimes c)=1\otimes \sigma(b)c,,$$ for $b\in B$, $c\in C,$ and $\bar{b}=b+\Ker\phi$.
Then $\psi$ is a well-defined homomorphism of $C^\mathrm{op}$-modules with respect to choosing a representative of $\bar{b}$, since if
$\bar{b}_1=\bar{b}_2$ for some $\bar{b}_1,\bar{b}_2\in B/\Ker \phi$, then there is $m\in\Ker\alpha_B\cap K$  and $b'\in B$ such that $b_2=b_1+mb'$, thus
$$1\otimes \sigma(b_2)c=1\otimes \sigma(b_1) c+1\otimes \sigma(m)\sigma(b')c=1\otimes \sigma(b_1)c.$$
The fact that $\psi$ is an isomorphism of $C^\mathrm{op}$-modules with its inverse
$$\psi^{-1}:k_{\alpha_B}\otimes_K C\rightarrow B/\Ker\phi\otimes_B C, \qquad 1\otimes c \mapsto \bar{1}\otimes c$$
is easy to check.
\qed\end{proof}

\begin{proposition}\label{prop32} Let $\phi:B\rightarrow A$ be a surjective homomorphism of augmented $k$-algebras. Let $C$ be
a interior $B$-algebra with structural homomorphism $\sigma:B\rightarrow C$. Then the following statements hold.
\begin{itemize}
\item[{\rm a)}]If $K$ is a subalgebra of $B$ such that
\[\Ker\alpha_B\cap K\leq \Ker \phi,\] then there is a structure of $K^\mathrm{op}$-module on $k_{\alpha_B}\otimes_K C$ such
    that $(k_{\alpha_B}\otimes_K C)^K$ is a interior $A$-algebra;
\item[{\rm b)}] If  $K$ is a subalgebra of $B$ such that
$$\Ker\alpha_B\cap K\leq\Ker \phi\leq (\Ker\alpha_B\cap K)B$$ then
$$\Ind_{A_{\phi}}(C)\cong (k_{\alpha_B}\otimes_K C)^K$$
as interior $A$-algebras.
\end{itemize}
\end{proposition}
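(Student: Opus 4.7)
The plan is to handle the two parts in sequence: part (a) constructs the algebra structure on $(k_{\alpha_B}\otimes_K C)^K$ directly, and part (b) invokes Lemma~\ref{lem31} to identify it with $\Ind_{A_\phi}(C)$.

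For part (a), I would define the right $K$-action on $M:=k_{\alpha_B}\otimes_K C$ by $(x\otimes c)\cdot k:=x\otimes c\,\sigma(k)$, checking well-definedness modulo the tensor relations over $K$. Since every element of $M$ has the form $1\otimes c$, I would define the product on $M^K:=\{y\in M : y\cdot k=\alpha_B(k)y\text{ for all }k\in K\}$ by $(1\otimes c_1)(1\otimes c_2):=1\otimes c_1c_2$ for $y_i=1\otimes c_i$. The key well-definedness check is that altering the representative of $y_2$ by $1\otimes \sigma(k_0)c'$ with $k_0\in\Ker\alpha_B\cap K$ leaves the product unchanged; this follows from the invariance $y_1\cdot k_0=\alpha_B(k_0)y_1=0$. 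Associativity is inherited from $C$ and $1\otimes 1_C$ is the identity. The $A$-interior structural map $\tau:A\to M^K$ is $\phi(b)\mapsto 1\otimes\sigma(b)$, and the inclusion $\Ker\alpha_B\cap K\leq\Ker\phi$ is used to show that $\tau$ is a well-defined algebra homomorphism with image in $M^K$.

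For part (b), the additional inclusion $\Ker\phi\leq(\Ker\alpha_B\cap K)B$ is exactly what Lemma~\ref{lem31} requires, yielding a $C^{\mathrm{op}}$-module isomorphism $\psi:A_\phi\otimes_B C\to M$, $\bar b\otimes c\mapsto 1\otimes\sigma(b)c$. Conjugating by $\psi$ gives $\Ind_{A_\phi}(C)=\End_{C^{\mathrm{op}}}(A_\phi\otimes_B C)\cong\End_{C^{\mathrm{op}}}(M)$. I would then construct the algebra isomorphism $\Theta:\End_{C^{\mathrm{op}}}(M)\to M^K$, $f\mapsto f(1\otimes 1)$, with inverse $y\mapsto(1\otimes c\mapsto y\cdot c)$; the invariance of $y$ is precisely what makes the inverse well-defined, and composition of endomorphisms matches the product on $M^K$ from part (a). Tracing left multiplication by $a=\phi(b)$ on $A_\phi\otimes_B C$ through $\psi$ and $\Theta$ gives $1\otimes\sigma(b)=\tau(a)$, confirming the composite is an isomorphism of $A$-interior algebras.

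The main obstacle throughout is the bookkeeping of well-definedness. In part (a), one must carefully use the invariance condition on $y_1$ to show the multiplication descends to $M^K$, and the single-direction inclusion $\Ker\alpha_B\cap K\leq\Ker\phi$ to pin down the structural map. In part (b), the delicate point is the $A$-interior compatibility: verifying that at each stage of the chain $\Ind_{A_\phi}(C)\cong\End_{C^{\mathrm{op}}}(M)\cong M^K$ left multiplication by $a\in A$ is carried to the corresponding element, so that the final isomorphism intertwines the structural maps rather than merely giving an abstract algebra isomorphism.
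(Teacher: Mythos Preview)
Your outline matches the paper's proof. In part (b) the paper compresses your map $\Theta$ into the chain $\End_{C^{\mathrm{op}}}(M)\cong\Hom_{K^{\mathrm{op}}}(k,M)\cong M^K$ coming from the tensor--hom adjunction for $\sigma|_K:K\to C$, with Lemma~\ref{lem31} supplying the first isomorphism; your evaluation $f\mapsto f(1\otimes 1_C)$ and its inverse are exactly this adjunction written out, so the arguments coincide.

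One caution in part (a): you attribute the hypothesis $\Ker\alpha_B\cap K\le\Ker\phi$ to the well-definedness of $\tau$ as a map out of $A\cong B/\Ker\phi$, but that inclusion points the wrong way for that purpose. In the paper this inclusion is used instead to show that the image of $\tau$ lies in $M^K$: from $x-\alpha_B(x)\in\Ker\alpha_B\cap K\subseteq\Ker\phi$ one gets $\overline{bx}=\overline{b\,\alpha_B(x)}$ in $B/\Ker\phi$, hence $\sigma'(\bar b)\cdot x=\sigma'(\bar b)\alpha_B(x)$. The well-definedness of $\sigma'$ itself (that $b\in\Ker\phi$ forces $1\otimes\sigma(b)=0$ in $k_{\alpha_B}\otimes_K C$) is simply asserted in the paper and would in fact use the other inclusion $\Ker\phi\le(\Ker\alpha_B\cap K)B$; be careful to keep straight which inclusion does which job.
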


\begin{proof} a) We have that $k_{\alpha_B}\otimes_K C$ is naturally a $K^\mathrm{op}$-module with the action
$$(1\otimes c)x=1\otimes c\sigma(x),\qquad c\in C,~x\in K.$$
Since $K$ is augmented, recall that
$$(k_{\alpha_B}\otimes_K C)^K=\left\{1\otimes c\in k_{\alpha_B}\otimes_K C\mid1\otimes c\sigma(x)=1\otimes c\alpha_B(x),~~\forall x\in K \right\}.$$
We claim that $(k_{\alpha_B}\otimes_K C)^K$ is a $k$-algebra with the multiplication
$$(1\otimes c)(1\otimes d)=1\otimes cd,\qquad 1\otimes c,~1\otimes d\in (k_{\alpha_B}\otimes_K C)^K.$$
Indeed, let $1\otimes c,1\otimes d\in(k_{\alpha_B}\otimes_K C)^K$; then we have
\begin{align*}\left((1\otimes c)(1\otimes d)\right)x & =1\otimes cd\sigma(x)=(1\otimes c)(1\otimes d\sigma(x))=(1\otimes c)(1\otimes d\alpha_B(x)) \\
 &=(1\otimes c)(1\otimes d)(1\otimes \alpha_B(x)1_C)=\left((1\otimes c)(1\otimes d)\right)\alpha_B(x).
\end{align*}
Next, it is easy to verify that the above multiplication is well-defined, associative and distributive.
We define the map
$$\sigma':B/\Ker\phi\rightarrow k_{\alpha_B}\otimes_K C, \qquad \bar{b}\mapsto 1\otimes\sigma(b),$$
which is an homomorphism of $k$-algebras. We verify that
$$\Ima\sigma'\subseteq(k_{\alpha_B}\otimes_K C)^K.$$ Indeed, let $x\in K$; then we have that
$$x-\alpha_B(x)\in\Ker\alpha_B\cap K$$ hence, for any $b\in B$ we obtain that
$$bx-b\alpha_B(x)\in B(\Ker\alpha_B\cap K).$$
But since $\Ker\alpha_B\cap K\leq \Ker \phi$, we get that $\overline{bx}=\overline{b\alpha_B(x)}.$ Since $\sigma'$ is also a homomorphism of $B^\mathrm{op}$-modules, we deduce that
$\sigma'(\bar{b})x=\sigma'(\bar{b})\alpha_B(x)$, and thus
$$\sigma'(\bar{b})\in(k_{\alpha_B}\otimes_K C)^K.$$ Since $A\cong B/\Ker\phi$ as $B^\mathrm{op}$-modules we deduce that $(k_{\alpha_B}\otimes_K C)^K$ is an interior $A$-algebra through the composition of $\sigma'$ with this isomorphism.

b) The  isomorphisms
$$\Ind_{A_{\phi}}(C)\cong \End_{C^\mathrm{op}}(k_{\alpha_B}\otimes_K C)\cong\Hom_{K^\mathrm{op}}(k,k_{\alpha_B}\otimes_K C)\cong(k_{\alpha_B}\otimes_K C)^K$$
hold, where  the first isomorphism is given by Lemma \ref{lem31}.
\end{proof}

Proposition \ref{prop32} allow us to state the next definition, and then notice that Linckelmann's generalization agree with our generalization of Puig induction through homomorphisms of augmented $k$-algebras.

\begin{definition}\label{defnsurj} Let $\phi:B\rightarrow A$ be a surjective homomorphism of augmented $k$-algebras. Assume that $K$ is a subalgebra of $B$ such that
$$\Ker\alpha_B\cap K\leq\Ker \phi\leq (\Ker\alpha_B\cap K)B.$$
Let $C$ be a interior $B$-algebra with structural homomorphism $\sigma:B\rightarrow C$. The \emph{surjective induction of $C$ through} $\phi$ is the $A$-interior algebra
$$\IndP_{\phi}(C):=(k_{\alpha_B}\otimes_K C)^K.$$
\end{definition}

\begin{example}Let $k$ be a field and let $B$ be a finite dimensional Hopf $k$-algebra. Let $K$ be a normal Hopf subalgebra of $B$, and set
$$A:=B/K^+B.$$
In this case $BK^+=K^+B$. By considering the homomorphism  $$\phi:B\rightarrow A,\qquad b\mapsto \bar{b}:=b+BK^+,$$ of Hopf algebras, we are in the situation of Proposition \ref{prop32}.
\end{example}

\subsection{The general case.}

\begin{definition}\label{defngennoninj} Let $\phi:B\rightarrow A$ be a homomorphism of augmented $k$-algebras. Assume that $K$ be is subalgebra of $B$ such that \[\Ker\alpha_B\cap K\leq\Ker \phi\leq (\Ker\alpha_B\cap K)B,\] and let $C$ be
a $B$-interior algebra with structural homomorphism $\sigma:B\rightarrow C$. The
\emph{induction of $C$ through} $\phi$ is
$$\IndP_{\phi}(C):=\Ind_M(C),$$
where we denoted $M:=A_{\phi}$, regarded as an $(A,B)$-bimodule.
\end{definition}

\begin{remark} Note that if we write $\phi=i\circ\bar{\phi}$, where the map $i$ is the inclusion from $\phi(B)$ to $A$, $\bar{\phi}:B\rightarrow \phi(B),$  and $M_1:=A$ regarded as an
$(A,\phi(B))$-bimodule, then
\[\IndP_{\phi}(C)=\Ind_{M_1}(\IndP_{\bar{\phi}}(C)).\]

In particular, if in the above definition, the algebra extension $\phi(B)\leq A$ is a left $\beta$-Frobenius extension, then by Definition \ref{defnsurj} and Theorem \ref{thm22} we have an isomorphism
$$\IndP_{\phi}(C)\cong A_{\beta}\otimes_{\phi(B)}(k_{\alpha_B}\otimes_K C)^K\otimes_{\phi(B)}A$$
of interior $A$-algebras. In the case of group algebras, we deduce \cite[Example 1.4]{LiInd}.
\end{remark}

\section{Induction for Hopf module algebras}\label{sec3}

In \cite[Definition 8.1]{Tur} A. Turull defined an induction of a $H$-algebra from a subgroup $H$ to a finite group $G$. We will generalize this to the context of Hopf module algebras. In addition, we will also define a surjective version of Turull's induction.

\subsection{The injective case.}

In this subsection let $k$ be a field, let $A$ be a finite dimensional Hopf algebras, and let $B$ be a Hopf subalgebra of $A$.  The counit of $A$ is denoted by $\varepsilon$, the comultiplication is denoted by $\Delta$ and the antipode is $S$. We will use the Sweedler notation $$\Delta(a)=\Sum a_{(1)}\otimes a_{(2)}$$ for any $a\in A.$

Let $F$ be the $k$-algebra considered in \cite{Ulb}; as a $k$-vector space $F$ consists of all right $B$-linear maps $f:A\rightarrow k$, that is,
\[F=\{f\in A^\times \mid f(ab)=f(a)\varepsilon(b), \textrm{ for all } a\in A,\ b\in B\};\] the product is given by
$$(f\cdot f')(a)=\Sum f(a_{(2)})f'(a_{(1)}),\qquad f,f'\in F.$$
The algebra $F$ has identity $\varepsilon$, and it is a left $A$-module with action
$$(af)(a')=f(S(a)a'),\qquad a,a'\in A.$$
Next, let $C_1:=A\otimes_B k$, which is an $A$-module coalgebra with comultiplication
$$C_1\rightarrow C_1\otimes C_1,~~~a\otimes 1\mapsto\Sum (a_{((1)}\otimes 1)\otimes (a_{(2)}\otimes 1).$$
It is well known that \[C_1\cong A/AB^+\] as $A$-module coalgebras. Moreover, from the proof of \cite[Lemma 1.1]{Ulb} we know that \[F\cong (C_1^\times)^\mathrm{op},\] that is, $F$ is essentially the opposite  of the $k$-dual algebra of $A\otimes_B k.$

The following lemma is probably well-known, but for completeness we include here its proof.

\begin{lemma}\label{lemFAmod} With the above notations, $F$ is a left $A$-module algebra.
\end{lemma}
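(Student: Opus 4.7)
The plan is to check, in order, that the formula $(af)(a')=f(S(a)a')$ lands in $F$, that it defines a left $A$-module structure on $F$, and then the two module-algebra axioms
\[a\cdot 1_F = \varepsilon(a)1_F, \qquad a\cdot(ff') = \Sum (a_{(1)}\cdot f)(a_{(2)}\cdot f').\]
Everything will be done by direct Sweedler calculation; the only ingredients beyond the definitions are the anti-comultiplicativity of the antipode, $\Delta(S(a)) = \Sum S(a_{(2)})\otimes S(a_{(1)})$, together with $\varepsilon\circ S = \varepsilon$ and $S(1)=1$.

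For stability of $F$ under the action, given $f\in F$, $a,a'\in A$ and $b\in B$, the computation
\[(af)(a'b) = f(S(a)a'b) = f(S(a)a')\varepsilon(b) = (af)(a')\varepsilon(b)\]
shows $af\in F$. The module axioms $1\cdot f=f$ and $(aa')\cdot f = a\cdot(a'\cdot f)$ follow immediately from $S(1)=1$ and $S(aa')=S(a')S(a)$. The unit compatibility reduces to
\[(a\cdot\varepsilon)(a') = \varepsilon(S(a)a') = \varepsilon(S(a))\,\varepsilon(a') = \varepsilon(a)\,\varepsilon(a'),\]
hence $a\cdot\varepsilon = \varepsilon(a)\varepsilon$.

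The main step is then the multiplicativity axiom, which I would verify by evaluating both sides on an arbitrary $a'\in A$. On the left, using the product formula of $F$,
\[\bigl(a\cdot(f\cdot f')\bigr)(a') = (f\cdot f')(S(a)a') = \Sum f\bigl((S(a)a')_{(2)}\bigr)\,f'\bigl((S(a)a')_{(1)}\bigr).\]
Expanding $\Delta(S(a)a') = \Delta(S(a))\Delta(a')$ and using $\Delta(S(a)) = \Sum S(a_{(2)})\otimes S(a_{(1)})$ yields
\[\Sum f\bigl(S(a_{(1)})a'_{(2)}\bigr)\, f'\bigl(S(a_{(2)})a'_{(1)}\bigr).\]
On the right, the product formula in $F$ together with the definition of the action gives the same expression. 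The only delicate point -- really the only place where anything non-formal happens -- is the Sweedler bookkeeping when the antipode is applied to a factor, so that the indices on $a_{(1)},a_{(2)}$ end up paired with those on $a'_{(2)},a'_{(1)}$ in precisely the way shown. No structural obstacle appears once the anti-comultiplicativity of $S$ is invoked.
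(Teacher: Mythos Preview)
Your proof is correct and follows essentially the same approach as the paper: both verify the multiplicativity axiom by evaluating at an arbitrary $a'\in A$ and invoking the anti-comultiplicativity of the antipode (the paper cites this as \cite[Proposition 1.5.10]{MoBook}). Your version is in fact more complete, since you also check stability of $F$ under the action, the module axioms, and the unit compatibility, all of which the paper either assumes from the preceding discussion or leaves implicit.
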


\begin{proof} From the above we know that $F$ is a left $A$-module. First we verify that for any $a\in A,f,f'\in F$ we have
$$a(f\cdot f')=\Sum(a_{(1)}f)\cdot(a_{(2)}f').$$
Indeed, for any $a'\in A$, we have
$$[a(f\cdot f')](a')=(f\cdot f')(S(a)a')\Sum f((S(a)a')_{(2)})f'((S(a)a')_{(1)});$$
on the other hand, we have
\begin{align*} \left[\Sum(a_{(1)}f)\cdot(a_{(2})f')\right](a')& =\Sum\Sum(a_{(1)}f)(a_{(2)}')(a_{(2)}f')(a_{(1)}') \\
   &= \Sum\Sum f(S(a_{(1)})a_{(2)}')f'(S(a_{(2)})a_{(1)}').
\end{align*}
The last sums from the above equalities are equal, since $\Delta$ is a homomorphism of $k$-algebras, and since $\Delta$ satisfies \cite[Proposition 1.5.10]{MoBook}.
\end{proof}

By using Lemma \ref{lemFAmod}, we can give the following definition.

\begin{definition}Let  $B$ be a Hopf subalgebra of the finite dimensional Hopf algebra $A$. Let $C$ be a $B$-module algebra. Then the
\emph{induction of $C$  from $B$ to $A$}  is the $A$-module algebra $$\IndT_{B}^{A}(C):=F\otimes_k C,$$ with multiplication given by
$$(f\otimes c)(f'\otimes c')=f\cdot f'\otimes cc', \qquad f,f'\in F,~ c,c'\in C,$$
and $A$-module algebra structure given by
$$a(f\otimes c)=af\otimes c,~~~~~~a\in A,~f\in F,~ c\in C.$$
\end{definition}

\begin{example} Let $H$ be a subgroup of a finite group $G$, let $B=kH$ and let $A=kG$.  In this case the product in $F$ is given by
$$(f\cdot f')(g)=f(g)f'(g),~~~g\in G.$$
Moreover, we have  an isomorphism of left $kG$-module coalgebras
$$A\otimes_B k\cong k[G/H],$$
where $[G/H]$ is a set of representatives of left cosets of $H$ in $G$, and a (non-canonical) isomorphism of $k$-spaces between $k[G/H]$ and its $k$-dual $k[G/H]^\times$. We
obtain an isomorphism of $kG$-module algebras
$$F\otimes C\cong\left(k[G/H]^*\right)^\mathrm{op}\otimes C\cong kG\otimes_{kH}C,$$
where $kG\otimes_{kH}C$ is the induced algebra of $C$, introduced by Turull \cite[Definition 8.1]{Tur}.
\end{example}

\subsection{The surjective case.}

We may define a surjective variant of Turull's induction just by taking a certain subalgebra of invariants.

\begin{definition}\label{defnturullsurj} Let $B$ be a finite dimensional Hopf algebra, and let $K$ be a normal Hopf subalgebra in $B$. Set $\overline{B}:=B/BK^+$ (recall that in this case $BK^+=K^+B$) and let $\phi:B\rightarrow \overline{B}$ be the canonical projection.

Let $C$ be a $B$-module algebra. The \emph{surjective induction of the $B$-module algebra $C$ through} $\phi$ is the $\bar{B}$-module subalgebra of $K$-invariant elements of $C$, that is,
$$\IndT_{\phi}(C):=C^K.$$
\end{definition}

\section{The connection between Puig's induction and Turull's induction}\label{sec4}

We keep the notations and assumptions of  Section \ref{sec3}, that is, $k$ is a field and  $B$ is a Hopf subalgebra of the finite dimensional Hopf algebra $A$.

If we take $B=k$ in Lemma \ref{lemFAmod}, then $F$ is identified to $(A^\times)^\mathrm{op}$ as an $A$-module algebra, where $A^\times=\Hom_k(A,k)$ is the $k$-dual of $A$. We also denote by $$A^{*}:=\Hom_B(A,B)$$ the $B$-dual of $A$, which is naturally an $(A,B)$-bimodule.

\begin{remark}\label{remAinter} Notice that  $\left(\End_B(A)\right)^\mathrm{op}$ is an interior $A$-algebra with the structural homomorphism
$$A\rightarrow \left(\End_B(A)\right)^\mathrm{op}, \qquad a\mapsto(x\mapsto xa),$$
and $ \End_{B^\mathrm{op}}\left(A^{*}\right)$ is a $A$-interior algebra with the structural homomorphism
$$A\rightarrow \End_{B^\mathrm{op}}\left(A^{*}\right), \qquad a\mapsto(\theta\mapsto a\theta), \qquad (a\theta)(x)=\theta(xa),$$
for any $a,x\in A$ and $\theta\in A^{*}$.
\end{remark}

\begin{lemma} \label{lem51}
\begin{itemize}
\item[{\rm a)}] Regarding $(A^\times)^\mathrm{op}$ as an $A$-module algebra, there is an anti-algebra isomorphism
$$\rho^\mathrm{op}:(A^\times)^\mathrm{op}\# A\rightarrow\End_k(A),~~~~~~~~\rho^\mathrm{op}(f\# a)(x)=\sum f(S(x_{(1)}))x_{(2)}a,$$  where $f\in A^\times$ and $a,x\in A$;
\item[{\rm b)}] We have that $\rho^\mathrm{op}(F\# A)\subseteq \End_B(A)$;
\item[{\rm c)}]  There is an isomorphism of $A$-interior algebras $$\left(\End_B(A)\right)^\mathrm{op}\cong \End_{B^\mathrm{op}}\left(A^{*}\right).$$
\end{itemize}
\end{lemma}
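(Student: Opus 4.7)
The lemma has three parts which I treat in order. For part (a), the strategy is to verify unit-preservation, anti-multiplicativity, and bijectivity separately. Unit: $\rho^{\mathrm{op}}(\varepsilon \# 1_A)(x) = \sum \varepsilon(S(x_{(1)})) x_{(2)} = \sum \varepsilon(x_{(1)}) x_{(2)} = x$, using $\varepsilon \circ S = \varepsilon$ and the counit axiom. Anti-multiplicativity is the main computation: unfolding the smash product multiplication (keeping track of the opposite convolution on $(A^\times)^{\mathrm{op}}$, namely $(f \cdot g)(y) = \sum g(y_{(1)}) f(y_{(2)})$, and the action $(af)(y) = f(S(a) y)$), one applies $\rho^{\mathrm{op}}$ and uses $\Delta \circ S = (S \otimes S) \circ \tau \circ \Delta$ together with coassociativity on $x$ to obtain
\[
\rho^{\mathrm{op}}\!\left((f\#a)(f'\#a')\right)(x) = \sum f'(S(a_{(1)}) S(x_{(2)})) f(S(x_{(1)})) x_{(3)} a_{(2)} a'.
\]
Expanding the composition $\rho^{\mathrm{op}}(f'\#a') \circ \rho^{\mathrm{op}}(f\#a)$ directly yields the same sum. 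For bijectivity I argue by dimension: both sides have $k$-dimension $(\dim_k A)^2$, and injectivity follows by choosing a $k$-basis $\{a_i\}$ of $A$ with dual basis $\{a_i^*\}$ of $A^\times$, writing any element of the smash product as $\sum_i a_i^* \# b_i$, and recovering each $b_i$ from $\rho^{\mathrm{op}}$ evaluated at suitable inputs.

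For part (b), fix $f \in F$, $a \in A$, $b \in B$, and $x \in A$. Expanding $\Delta(bx) = \sum b_{(1)} x_{(1)} \otimes b_{(2)} x_{(2)}$ and applying the anti-multiplicativity of $S$ gives
\[
\rho^{\mathrm{op}}(f\#a)(bx) = \sum f(S(x_{(1)}) S(b_{(1)})) b_{(2)} x_{(2)} a.
\]
Since $B$ is a Hopf subalgebra, $S(b_{(1)}) \in B$, so the right $B$-linearity of $f$ yields $f(S(x_{(1)}) S(b_{(1)})) = f(S(x_{(1)})) \varepsilon(b_{(1)})$ (using also $\varepsilon \circ S = \varepsilon$). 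The counit identity $\sum \varepsilon(b_{(1)}) b_{(2)} = b$ then gives $b \cdot \rho^{\mathrm{op}}(f\#a)(x)$, which is left $B$-linearity of $\rho^{\mathrm{op}}(f\#a)$. By Remark~\ref{remAinter}, this is exactly the condition for membership in $\End_B(A)$.

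For part (c), I define
\[
\Phi : (\End_B(A))^{\mathrm{op}} \to \End_{B^{\mathrm{op}}}(A^*), \qquad \phi \mapsto (\theta \mapsto \theta \circ \phi).
\]
Well-definedness is immediate: if $\phi$ is left $B$-linear and $\theta \in \Hom_B(A, B)$, then $\theta \circ \phi \in \Hom_B(A, B)$, and the map $\theta \mapsto \theta \circ \phi$ is right $B$-linear, since $(\theta b) \circ \phi = (\theta \circ \phi) b$. Precomposition reverses composition order, so $\Phi$ is multiplicative on the opposite algebra. Compatibility with the $A$-interior structures of Remark~\ref{remAinter} reduces to $\Phi(R_a) = L_a$, where $R_a(x) = xa$ and $(L_a \theta)(x) = (a\theta)(x) = \theta(xa)$; this is immediate from $(\theta \circ R_a)(x) = \theta(xa)$. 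For bijectivity, the Nichols--Zoeller theorem guarantees that $A$ is free---in particular, finitely generated and projective---as a left $B$-module, and the standard duality for finitely generated projective modules yields an anti-isomorphism $\End_B(A) \cong \End_{B^{\mathrm{op}}}(A^*)^{\mathrm{op}}$.

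The principal technical obstacle is the careful Sweedler bookkeeping required for the anti-multiplicativity in part (a), where one must juggle the opposite convolution on $(A^\times)^{\mathrm{op}}$, the antipode-twisted $A$-action, the antipode's behaviour under $\Delta$, and coassociativity applied to $x$ without error. Parts (b) and (c) are more routine once the conventions from (a) are in place, with the one substantive external input in (c) being the Nichols--Zoeller freeness of $A$ over $B$.
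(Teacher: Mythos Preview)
Your argument tracks the paper's proof closely in all three parts: the anti-multiplicativity computation in (a), the left $B$-linearity check in (b), and the precomposition map $\Phi$ in (c) are exactly what the paper does. Two small divergences are worth flagging. First, your injectivity sketch in (a) is not quite as direct as you suggest: from $\rho^{\mathrm{op}}\bigl(\sum_i a_i^{*}\# b_i\bigr)(x)=\sum_i\sum a_i^{*}(S(x_{(1)}))x_{(2)}b_i$ one cannot simply ``evaluate at suitable inputs'' to read off the $b_i$, because the $x_{(2)}$ factor mixes the terms. The paper handles this by introducing an auxiliary map $\psi:\End_k(A)\to\End_k(A)$, $\psi(\zeta)(x)=\sum S(x_{(2)})\zeta(x_{(1)})$, and checking that $\psi\circ\rho^{\mathrm{op}}=\rho'$ where $\rho'(f\#a)(x)=f(S(x))a$; injectivity of $\rho'$ is then transparent. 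Your dimension count plus this corrected injectivity step gives the isomorphism. Second, for bijectivity in (c) the paper carries out an explicit matrix-unit computation with a chosen $B$-basis $\{e_i\}$ of $A$ (which, as you note, exists by Nichols--Zoeller), whereas you appeal to the standard duality $\End_B(A)\cong\End_{B^{\mathrm{op}}}(A^{*})^{\mathrm{op}}$ for finitely generated projective modules; both are valid and amount to the same thing.
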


\begin{proof} a) We prove that $\rho^\mathrm{op}$ is an anti-algebra homomorphism. Indeed, let $f,f'\in A^\times$ and $a,a',x\in A$; then we have
\begin{align*}\rho^\mathrm{op}((f\# a)(f'\#a'))(x)&=\sum \sum (f\cdot (a_{(1)}f'))(S(x_{(1)}))x_{(2)}a_{(2)}a'\\
&=\sum \sum \sum f(S(x_{(1)})_{(2)}) (a_{(1)}f'))(S(x_{(1)})_{(1)})x_{(2)}a_{(2)}a'\\
&=\sum \sum \sum f(S(x_{(1)_{(1)}}) (a_{(1)}f'))(S(x_{(1)_{(2)}}))x_{(2)}a_{(2)}a'
\\
&=\sum \sum \sum f(S(x_{(1)_{(1)}}) f'(S(a_{(1)})S(x_{(1)_{(2)}}))x_{(2)}a_{(2)}a'
\end{align*}
On the other hand, we have
\begin{align*}\rho^\mathrm{op}(f'\# a')\left(\rho^\mathrm{op}(f\#a)(x)\right)&=\rho^\mathrm{op}(f'\#a')\left( \sum f(S(x_{(1)}))x_{(2)}a\right)\\
&=\sum f(S(x_{(1)})) \rho^\mathrm{op}(f'\# a')(x_{(2)}a)\\
&=\sum\sum f(S(x_{(1)}))f'(S((x_{(2)}a)_{(1)}))(x_{(2)}a)_{(2)}a'\\
&=\sum\sum\sum f(S(x_{(1)}))f'(S((x_{(2)_{(1)}}a_{(1)}))x_{(2)_{(2)}}a_{(2)}a'\\
&=\sum\sum\sum f(S(x_{(1)}))f'(S(a_{(1)})S((x_{(2)_{(1)}}))x_{(2)_{(2)}}a_{(2)}a',
\end{align*}
and the above last sums are equal by coassociativity.

Next, it is enough to prove that $\rho^\mathrm{op}$ is injective. For this we define two maps
$$\rho':(A^\times)^\mathrm{op}\# A\rightarrow \End_k(A)$$ and
$$\psi:\End_k(A)\rightarrow\End_k(A)$$
such that $\rho'=\psi\circ \rho^\mathrm{op}$ and $\rho'$ is injective, thus $\rho^\mathrm{op}$ is injective. These maps are defined as follows.
$$\rho'(f\#a)(x):=f(S(x))a,$$ and
$$\psi(\zeta)(x):=\sum S(x_{(2)})\zeta(x_{(1)}),$$
where $f\in A^\times,$ $\zeta\in\End_k(A)$ and $a,x\in A$.
The injectivity of $\rho'$ is easy to check, since it follows by the simple argument that $\rho'$ takes a $k$-basis into a $k$-basis. We verify that $\rho'=\psi\circ \rho^\mathrm{op}$. Indeed, for any $f\in A^\times$ and $a,x\in A$ we have that
\begin{align*}(\psi\circ\rho^\mathrm{op})(f\# a)(x)
&=\sum S(x_{(2)}) \rho^\mathrm{op}(f\# a)(x_{(1)})\\
&=\sum\sum S(x_{(2)}) f(S(x_{(1)_{(1)}}))x_{(1)_{(2)}}a\\
&=\sum\sum S(x_{(2)}) x_{(1)_{(2)}}f(S(x_{(1)_{(1)}}))a\\
&=\sum\sum S(x_{(2)_{(2)}}) x_{(2)_{(1)}}f(S(x_{(1)}))a\\
&=\sum \varepsilon(x_{(2)})f(S(x_{(1)}))a\\
&=\sum f(S(x_{(1)}\varepsilon(x_{(2)}))a\\
&=f(S(ax))a=\rho'(f\#a)(x).
\end{align*}

b) Let $f\in F$, $b\in B$ and $a,x\in A$; then we have
\begin{align*}\rho^\mathrm{op}(f\# a)(bx)&=\sum f(S(b_{(1)}x_{(1)}))b_{(2)}x_{(2)}a\\
&=\sum f(S(x_{(1)})S(b_{(1)}))b_{(2)}x_{(2)}a\\
&=\sum f(S(x_{(1)}))\epsilon(S(b_{(1)}))b_{(2)}x_{(2)}a\\
&=\sum f(S(x_{(1)}))\epsilon(b_{(1)})b_{(2)}x_{(2)}a\\
&=\sum bf(S(x_{(1)}))x_{(2)}a.\end{align*}

c) Consider the map
\[\Phi:\left(\End_B(A)\right)^\mathrm{op}\to \End_{B^\mathrm{op}}\left(A^{*}\right),\qquad f\mapsto f^{*},\qquad f^{*}(\theta)=\theta\circ f,\] where $\theta\in A^{*}.$ We have that $A^{*}$ is a right $B$-module satisfying
\[(\theta\cdot b)(a)=\theta(a)b,\] for any $a\in A $ and $b\in B.$ Consequently
$$f^{*}(\theta\cdot b)(a)=((\theta\cdot b)\circ f)(a)=\theta(f(a))b=(f^{*}(\theta)\cdot b)(a),$$ hence $\Phi$ is well defined. It is easy to check that $\Phi$ is an homomorphism of interior $A$-algebras, with respect to the structure of interior $A$-algebras given in Remark \ref{remAinter}.

Since we are dealing with finite dimensional Hopf algebras, we may choose a basis $$\{e_i\mid i=1,\ldots, n\},$$  of $A$ as a left $B$-module. Further, for any $i,j\in \{1,\ldots, n\}$, the left $B$-linear maps
\[f_{i,j}:A\to A, \qquad f_{i,j}(e_k)=\delta_{i,k}\cdot e_j,\] form a right $B$-module basis of $\End_B(A)$, where $\End_B(A)$ is a right $B$-module by transporting the structure of right $B$-module of $(\mathcal{M}_n(B))^\mathrm{op}$ through the isomorphism
$$\End_B(A)\cong (\mathcal{M}_n(B))^\mathrm{op};$$
here $(\mathcal{M}_n(B))^\mathrm{op}$ is a right $B$-module by multiplying each element of a matrix on the left hand side with the same element of $B$.

Dually, we have that the set $$\{e^*_i\mid i=1,\ldots, n\},$$ where
\[(e^*_i\cdot b)(e_j)=e^*_i(e_j)b,\] is a basis of $ A^{*}$  as a right $B$-module. Similarly, the maps
$$f_{i,j}^{*}: A^{*}\to  A^{*},\qquad f_{i,j}^{*}(e^*_k)=\delta_{i,k}\cdot e^*_j$$ form a basis of $\End_{B^\mathrm{op}}\left(A^{*}\right)$ as right $B$-module, the structure being obtained as above by transporting the right $B$-module structure of $(\mathcal{M}_n(B))^\mathrm{op}$ through an isomorphism
$$\End_{B^\mathrm{op}}(A^{*})\cong (\mathcal{M}_n(B))^\mathrm{op}.$$ Now, we have that
\begin{align*}
(\Phi(f_{i,j})(e^*_k))(e_m)&=(e^*_k\circ f_{i,j})(e_m)=e^*_k(f_{i,j}(e_m))\\
&=\begin{cases}e^*_k(e_j), \mbox{ if } i=m\\ 0,~~~~~~~~ \mbox{ if } i\neq m\end{cases}\\&= \begin{cases}1, \mbox{ if } k=j \mbox{ and } i=m\\ 0, \mbox{ otherwise } m\end{cases}\\
&=(\delta_{j,k}\cdot e^*_i)(e_m)=(f^*_{j,i}(e^{*}_k))(e_m),
\end{align*}
hence $\Phi $ maps a $B$-basis of $\End_B(A^{*})$ bijectively onto a $B$-basis of $\End_{B^\mathrm{op}}(A^{*})$.
\end{proof}

\begin{remark}\label{remrhoF} By Lemma \ref{lem51} i), ii) we may define the injective homomorphism of $k$-algebras
$$\rho_F^\mathrm{op}:F\# A\rightarrow (\End_B(A))^\mathrm{op}~~~~~~~~~~\rho_F^\mathrm{op}(f\#a)=\rho^\mathrm{op}(f\# a),$$
where $f\in F$, and $a\in A.$ It is easy to check that $\rho_F^\mathrm{op}$ is an homomorphism of $A$-interior algebras, and since $$\dim_k(F\#A)=\dim_k\End_B(A)=(\dim_kA)^2/\dim_kB,$$ we deduce that $\rho_F^\mathrm{op}$ is actually an isomorphism.
\end{remark}

Recall that if  $B$ is a Hopf subalgebra of a finite dimensional Hopf algebra $A$, then, by \cite[Theorem 1.7]{FM}, there is $\beta\in\Aut_k(B)$ such that $B\leq A$ is a left $\beta$-Frobenius extension. We may now state the injective version of our main result.

\begin{theorem}\label{thminjpuigturull}   Let $B$ be a Hopf subalgebra of a finite dimensional Hopf algebra $A$, and let $C$ be a $B$-module algebra. Then there is an isomorphism
  $$\IndT_{B}^{A}(C)\#A\rightarrow\Ind_{A_{\beta}}(C\#B)$$
of $A$-interior algebras.
\end{theorem}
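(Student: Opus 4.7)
The plan is to combine the isomorphisms already established in this section with Theorem \ref{thm22}, reducing both sides of the claimed isomorphism to a common form, and then to exhibit the isomorphism explicitly.

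First I would rewrite the left-hand side: since $A$ acts trivially on $C$ in the $A$-module algebra $F \otimes_k C$, the map $(f \otimes c) \# a \mapsto (f \# a) \otimes c$ is an isomorphism of $A$-interior algebras $(F \otimes_k C) \# A \cong (F \# A) \otimes_k C$. On the right-hand side, by \cite[Theorem 1.7]{FM} the extension $B \leq A$ is left $\beta$-Frobenius, and Theorem \ref{thm22} applied to the $B$-interior algebra $C \# B$ (with structural homomorphism $\sigma(b) = 1_C \# b$) gives an isomorphism of $A$-interior algebras
\[ \Ind_{A_{\beta}}(C \# B) \cong A_{\beta} \otimes_B (C \# B) \otimes_B A, \]
with the multiplication given in the proof of Theorem \ref{thm22}.

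The main step is then to construct an isomorphism of $A$-interior algebras
\[ \Phi : (F \# A) \otimes_k C \longrightarrow A_{\beta} \otimes_B (C \# B) \otimes_B A. \]
Motivated by the inverse formula $\Psi^{-1}(g) = \sum_i g(r_i \otimes 1) \otimes l_i$ from the proof of Theorem \ref{thm22} and by the isomorphism $F \# A \cong (\End_B A)^{\mathrm{op}}$ of Remark \ref{remrhoF}, which is given by $\rho^{\mathrm{op}}(f \# a)(x) = \sum f(S(x_{(1)})) x_{(2)} a$ from Lemma \ref{lem51}(a), a natural candidate is
\[ \Phi((f \# a) \otimes c) := \sum_{i=1}^n \rho^{\mathrm{op}}(f \# a)(r_i) \otimes (c \# 1_B) \otimes l_i, \]
where $\{r_i\}, \{l_i\}$ are the dual bases of the $\beta$-Frobenius extension. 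I would then verify: (a) that $\Phi$ is well-defined and independent of the choice of dual basis, via \eqref{eqasumrili}; (b) that $\Phi$ commutes with the structural $A$-interior homomorphisms (matching the formula $\tau(a) = \sum_i a r_i \otimes 1 \otimes l_i$ from the proof of Theorem \ref{thm22}); (c) that $\Phi$ is multiplicative; and (d) that $\Phi$ is bijective, which follows from a dimension count (both sides having dimension $\dim_k(A)^2 \dim_k(C)/\dim_k(B)$) once the algebra-homomorphism property is established.

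The main obstacle will be (c), the multiplicativity. On the right, the product rule from Theorem \ref{thm22} is
\[ (a_1 \otimes u_1 \otimes a_1')(a_2 \otimes u_2 \otimes a_2') = a_1 \otimes u_1 \, (\sigma \circ \beta^{-1} \circ \varphi)(a_1' a_2) \, u_2 \otimes a_2', \]
and when $u_i = c_i \# 1_B$, expanding the middle smash-product multiplication through $(c \# b)(c' \# b') = \sum c \, b_{(1)}(c') \# b_{(2)} b'$ produces sums over Sweedler indices of $B$. Matching these with the terms arising from the convolution product on $F$, the $A$-action $(af)(x) = f(S(a)x)$, and the multiplication in $C$ on the left-hand side will require coassociativity of $\Delta_A$, the antipode axioms, the $B$-module algebra compatibility $b \cdot (c_1 c_2) = \sum b_{(1)}(c_1) \, b_{(2)}(c_2)$, and the dual-basis identity \eqref{eqasumrili}.
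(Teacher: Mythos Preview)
Your overall plan --- rewrite the left side as $(F\#A)\otimes_k C$ and then build an explicit map into a tensor-product model of $\Ind_{A_\beta}(C\#B)$ --- is reasonable, and your first reduction agrees with Step~1 of the paper's proof. However, the specific candidate
\[
\Phi((f\#a)\otimes c)=\sum_{i} \rho^{\mathrm{op}}(f\#a)(r_i)\otimes (c\#1_B)\otimes l_i
\]
does not work, for a structural reason rather than a computational one. The map $g:=\rho^{\mathrm{op}}(f\#a)$ lies in $\End_B(A)$, i.e.\ it is a \emph{left} $B$-linear endomorphism of $A$, whereas applying it to the first tensor factor of $A_\beta\otimes_B(C\#B)$ would require \emph{right} $B$-linearity (for the $\beta$-twisted action). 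Concretely, for $f=\varepsilon$ one has $g=(x\mapsto xa)$, so your formula gives $\sum_i r_ia\otimes 1\otimes l_i$, while the structural map from Theorem~\ref{thm22} is $\tau(a)=\sum_i ar_i\otimes 1\otimes l_i$; these differ already for $B=k$ and $A=\mathcal M_2(k)$. The same mismatch obstructs both the independence of dual basis in (a) and the multiplicativity in (c): the identity $\sum_i r_i\varphi(l_i x)=x$ cannot be used to collapse $\sum_i g(r_i)\varphi(l_i\,\cdot\,)$ because $\varphi(l_i\,\cdot\,)$ sits on the right of $r_i$ and $g$ is only left $B$-linear.

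The paper avoids this trap by staying in the endomorphism picture. After Step~1 it passes through Lemma~\ref{lem51}~c), replacing $g\in(\End_B A)^{\mathrm{op}}$ by $g^{*}=(\theta\mapsto\theta\circ g)\in\End_{B^{\mathrm{op}}}(A^{*})$, which \emph{is} right $B$-linear on $A^{*}$. Then the map $\End_{B^{\mathrm{op}}}(A^{*})\otimes C\to\End_{(C\#B)^{\mathrm{op}}}(A^{*}\otimes_B C\#B)$, sending $g^{*}\otimes c$ to $\theta\otimes u\mapsto g^{*}(\theta)\otimes(c\#1)u$, is tautologically an algebra homomorphism; only bijectivity needs the dimension count. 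Finally the $(A,B)$-bimodule isomorphism $A_\beta\cong A^{*}$ (coming from the $\beta$-Frobenius datum, not from $g$) converts $A^{*}$ into $A_\beta$. If you wish to keep your tensor-product route, the fix is to transport $g^{*}$ through $A^{*}\cong A_\beta$ rather than to apply $g$ directly to $r_i$; this will reintroduce $\varphi$ into the formula and restore all four verifications.
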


\begin{proof} We will construct the isomorphisms of $A$-interior algebras
$$\xymatrix{(F\otimes C)\# A\ar[r]^{\Phi~~}&(\End_B(A))^\mathrm{op}\otimes C\ar[r]&\End_{B^\mathrm{op}}(A^{*})\otimes C  }$$
and
$$\xymatrix{ \End_{B^\mathrm{op}}(A^{*})\otimes C\ar[r]^{\Psi~~~~~~~}&\End_{(C\#B)^\mathrm{op}}(A^{*}\otimes_BC\#B)\ar[r]&\End_{(C\#B)^\mathrm{op}}(A_{\beta}\otimes_BC\#B)}$$
 in a sequence of steps.

\emph{Step 1.} Let $$\Phi:(F\otimes C)\#A\rightarrow \left(\End_B(A)\right)^\mathrm{op}\otimes C,\qquad (f \otimes c)\#a \mapsto \rho_F^\mathrm{op}(f\# a)\otimes c,$$
for any $f\in F,a\in A,c\in C$. By Remark \ref{remrhoF} we know that $\rho_F^\mathrm{op}$ is an isomorphism of $A$-interior algebras, hence $\Phi$ is an isomorphism of $A$-interior algebras; the structure of $A$-interior algebra of $(F\otimes C)\#A$ is given by
$$A\rightarrow (F\otimes C)\#A, \qquad a\mapsto(\varepsilon\otimes 1)\# a,$$
while the $A$-interior structure for $\left(\End_B(A)\right)^\mathrm{op}\otimes C$ is obtained by composing the structural homomorphism from Remark \ref{remAinter} with the homomorphism of $k$-algebras
$$\left(\End_B(A)\right)^\mathrm{op}\rightarrow\left(\End_B(A)\right)^\mathrm{op}\otimes C, \qquad \eta\mapsto\eta\otimes 1.$$

\emph{Step 2.} By Lemma \ref{lem51} c) we obtain the isomorphism
$$\left(\End_B(A)\right)^\mathrm{op}\otimes C\cong \End_{B^\mathrm{op}}(A^{*})\otimes C$$ of $A$-interior algebras.

\emph{Step 3.} Define the map
$$\Psi:\End_{B^\mathrm{op}}(A^{*})\otimes C\rightarrow \End_{(C\# B)^\mathrm{op}}(A^{*}\otimes_B C\# B)$$ by
$$\Psi(f^{*}\otimes c)(\theta\otimes c'\#b')=f^{*}(\theta)\otimes cc'\#b',$$
for $f^{*}\in \End_{B^\mathrm{op}}(A^{*})$, $\theta\in A^{*}$, and $b'\in B,c,c'\in C$. This is clearly a homomorphism of $A$-interior algebras.

We only need verify that the domain and the codomain have the same dimension as $k$-vector spaces; then, by using  arguments similar to those from the end of proof of Lemma \ref{lem51}, we deduce  that $\Psi$ is bijective. Recall that
$$\End_{B^\mathrm{op}}(A^{*})\cong \mathcal{M}_n(B)^\mathrm{op}\cong (\mathcal{M}_n(k)\otimes B)^\mathrm{op}$$
as $k$-algebras, where $n$ is the number of elements of a basis of $A$ as left $B$-modules. It follows that
$$\End_{B^\mathrm{op}}(A^{*})\otimes C\cong (\mathcal{M}_n(k)\otimes B)^\mathrm{op}\otimes C$$
as $k$-algebras, and in particular $$\dim_k(\End_{B^\mathrm{op}}(A^{*})\otimes C)=n^2\cdot \dim_kB\cdot \dim_kC.$$
 By Theorem \ref{thm22} we have that
 $$\dim_k\left(\End_{(C\#B)^\mathrm{op}}(A^{*}\otimes_BC\# B)\right)=\dim_k(A_{\beta}\otimes_BC\# B\otimes_B A);$$
but the last term is  equal to
 $$\dim_k((A\otimes_Bk)\otimes C\otimes B\otimes_B A)=\dim_k(F\otimes C\otimes A)=n^2\cdot \dim_kB\cdot \dim_kC,$$
where the last equality is true since $\dim_k F=\dim_k A/\dim_kB=n$ (see the remarks from the beginning of Section \ref{sec3} and \cite[Corollary 3.2.1, Theorem 3.3.1]{MoBook}).

\emph{Step 4.} Next, by \cite[Remark 1.2, b)]{FM} it follows that $B\leq A$ is also right $\beta^{-1}$-Frobenius extension, hence by \cite[Definition 1.1]{FM}, there is an isomorphism
$$A\cong \Hom_B(A,B)_{\beta^{-1}}$$
of $(A,B)$-bimodules. Clearly,  this isomorphism induces the isomorphism
$$A_{\beta}\cong A^{*}$$
of $(A,B)$-bimodules, and hence the isomorphism
$$\End_{(C\# B)^\mathrm{op}}(A^{*}\otimes_B C\# B)\cong \End_{(C\# B)^\mathrm{op}}(A_{\beta}\otimes_B C\# B)$$
of $A$-interior algebras. We compose the isomorphisms from these four steps and we are done.
\qed\end{proof}

Combining this with Theorem \ref{thm22}, we obtain the following corollary which generalizes \cite[Theorem 1]{C}.

\begin{corollary} There is an isomorphism  of $A$-interior algebras
$$(F\otimes C)\#A\rightarrow A_{\beta}\otimes_BC\#B\otimes _B A.$$
\end{corollary}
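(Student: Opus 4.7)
The plan is to chain together the two main theorems already established. First I would invoke Theorem~\ref{thminjpuigturull} applied to the $B$-module algebra $C$, which yields an isomorphism of $A$-interior algebras
\[ (F\otimes C)\#A \;=\; \IndT_{B}^{A}(C)\#A \;\xrightarrow{\;\sim\;}\; \Ind_{A_\beta}(C\#B). \]
Here I am using the definition $\IndT_{B}^{A}(C)=F\otimes_k C$ from Section~\ref{sec3}.

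Next I would observe that $C\#B$ carries a natural structure of $B$-interior algebra via the structural map $b\mapsto 1_C\# b$, so Theorem~\ref{thm22} applies to it. Since $B\leq A$ is a left $\beta$-Frobenius extension (by \cite[Theorem 1.7]{FM}, as recalled just before Theorem~\ref{thminjpuigturull}), Theorem~\ref{thm22} provides an isomorphism of $A$-interior algebras
\[ \Ind_{A_\beta}(C\#B) \;\xrightarrow{\;\sim\;}\; A_\beta\otimes_B (C\#B)\otimes_B A. \]

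Composing these two $A$-interior algebra isomorphisms gives the required isomorphism
\[ (F\otimes C)\#A \;\xrightarrow{\;\sim\;}\; A_\beta\otimes_B C\#B\otimes_B A. \]
There is no real obstacle here, since both intermediate isomorphisms have already been verified to respect the $A$-interior structure (the structural homomorphism $A\to(F\otimes C)\#A$ sends $a\mapsto(\varepsilon\otimes 1_C)\#a$, which matches the structural map $\tau$ of Theorem~\ref{thm22} through the chain); the only thing worth remarking is that the composition of $A$-interior algebra isomorphisms is again an $A$-interior algebra isomorphism, so no further diagram chase is needed.
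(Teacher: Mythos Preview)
Your proposal is correct and matches the paper's own approach exactly: the corollary is stated as an immediate consequence of combining Theorem~\ref{thminjpuigturull} with Theorem~\ref{thm22}, precisely the two isomorphisms you chain together. There is nothing to add.
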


Finally, in the surjective case, we have a result analogous  to Theorem \ref{thminjpuigturull}.

\begin{theorem}\label{thmsurjPuTur} Let $B$ be a finite dimensional Hopf algebra and let $K$ be a normal Hopf subalgebra of $B$. Set $\overline{B}:=B/BK^+$ and let $\phi:B\rightarrow \overline{B}$ be the canonical projection.

Let $C$ be a $B$-module algebra. Then there is an isomorphism
  $$\IndT_{\phi}(C)\# \bar{B}\cong \IndP_{\phi}(C\# B)$$
of  $\bar{B}$-interior algebras.
\end{theorem}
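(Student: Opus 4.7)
The plan is to define an explicit candidate and reduce bijectivity to an antipode-based untwisting of the smash product. I would set
\[\mu: C^K\# \bar B \longrightarrow \IndP_\phi(C\# B) = (k_{\alpha_B}\otimes_K(C\# B))^K, \qquad c\# \bar b\mapsto 1\otimes (c\# b),\]
and use the identity $(1\# k)(c\# b)=c\# kb$ for $c\in C^K$, $k\in K$, $b\in B$ (immediate from $K$-invariance of $c$) together with normality $BK^+=K^+B$ to verify: $\mu$ is well-defined, since for $b=\Sum k_jb_j\in K^+B$ we have $c\# b\in \sigma(K^+)(C\# B)$, hence $1\otimes c\# b=0$; the image lies in the $K$-invariants, since $(1\otimes c\# b)\cdot x=1\otimes c\# bx$ and $bx-\varepsilon(x)b\in BK^+$; and $C^K$ is a $\bar B$-module algebra (because $BK^+\cdot C^K=0$), so $C^K\# \bar B$ is meaningful. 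Multiplicativity and compatibility with the $\bar B$-interior structures (against the map $\sigma'$ from Proposition \ref{prop32}) are then direct Sweedler computations.

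The crux is bijectivity. My plan is to untwist the smash-product action via
\[\Theta:C\otimes B\to C\otimes B,\qquad c\otimes b\mapsto \Sum S^{-1}(b_{(1)})\cdot c\otimes b_{(2)},\]
with proposed inverse $\Theta^{-1}(c\otimes b)=\Sum b_{(1)}\cdot c\otimes b_{(2)}$. The antipode identities $\Sum S^{-1}(a_{(2)})a_{(1)}=\varepsilon(a)=\Sum a_{(2)}S^{-1}(a_{(1)})$ (valid since $B$ is finite-dimensional) combined with coassociativity confirm $\Theta\Theta^{-1}=\Theta^{-1}\Theta=\mathrm{id}$, and analogous computations show $\Theta$ is a left $B$-module isomorphism from $C\# B$ (smash action via $\sigma$) to $C\otimes B$ (with $B$ acting by left multiplication on the second factor only). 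Composing with the isomorphism $k_{\alpha_B}\otimes_K(C\# B)\cong \bar B\otimes_B(C\# B)$ from Lemma \ref{lem31} yields a $k$-linear isomorphism
\[\tau:k_{\alpha_B}\otimes_K(C\# B)\xrightarrow{\sim} C\otimes \bar B,\qquad 1\otimes(c\# b)\mapsto \Sum S^{-1}(b_{(1)})\cdot c\otimes \overline{b_{(2)}}.\]

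Transporting the right $K$-action along $\tau$, a short calculation gives $(c\otimes \bar b)\cdot x=(S^{-1}(x)\cdot c)\otimes \bar b$, whose invariants are precisely $C^K\otimes \bar B$; hence $\dim_k(k_{\alpha_B}\otimes_K(C\# B))^K=\dim_k(C^K\# \bar B)$. The restriction of $\tau\circ\mu$ to $C^K\otimes \bar B$ is the map $c\otimes \bar b\mapsto \Sum S^{-1}(b_{(1)})\cdot c\otimes \overline{b_{(2)}}$, whose inverse is the well-defined descent of $\Theta^{-1}$ (using $K$-normality and $c\in C^K$), so $\mu$ is bijective. The main obstacle is precisely this untwisting step: in the non-semisimple setting $C^K\cap K^+C$ may be nonzero, so a naive injectivity argument for $\mu$ fails, and the antipode identities behind $\Theta$ are essential to recover bijectivity.
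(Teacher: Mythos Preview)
Your proof is correct and uses the same explicit candidate map and the same key simplification $(1\# x)(c\# b)=c\# xb$ for $c\in C^K$ that the paper does. The main difference lies in the bijectivity argument: the paper introduces an auxiliary ``second'' left $K$-action $x\cdot(c\# b)=c\# xb$ on $C\# B$, builds an isomorphism $\Phi:C\#\bar B\to k_\varepsilon\otimes_K(C\# B)$ with respect to this second action (with explicit inverse $1\otimes c\# b\mapsto c\#\bar b$), equips $C\#\bar B$ with the right $K$-action $(c\#\bar b)x=S(x)c\#\bar b$, and then argues that on $K$-invariants the two tensor products (for the smash action and the second action) can be identified because the two actions coincide on $C^K\# B$. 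Your untwisting isomorphism $\Theta(c\otimes b)=\sum S^{-1}(b_{(1)})c\otimes b_{(2)}$ accomplishes the same thing more transparently: it is precisely the $B$-module isomorphism intertwining the smash action with the second action, so your $\tau$ makes explicit the identification that the paper only asserts at the end. Your version has the advantage of giving a clean global isomorphism $k_\varepsilon\otimes_K(C\# B)\cong C\otimes\bar B$ before passing to invariants, whereas the paper's route avoids $S^{-1}$ but is less explicit about why the two tensor products have the same $K$-invariants. The citation of Lemma~\ref{lem31} in your argument is not actually needed; the isomorphism $k_\varepsilon\otimes_K(C\otimes B)_{\mathrm{second}}\cong C\otimes\bar B$ follows directly from $k_\varepsilon\otimes_K B\cong\bar B$.
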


\begin{proof} More explicitly, the requested isomorphism is
$$C^K\#\overline{B}\cong (k_{\varepsilon}\otimes_KC\#B)^K,$$
where the smash prooduct $C\# B$ is the $B$-interior algebra with the structural homomorphism
  $$B\rightarrow C\# B, \qquad b\mapsto1\#b.$$
Recall that the left action of $K$ on $C\#B$ is given by
\begin{equation}\label{eq1left}
x(c\#b)=\sum x_{(1)}c\#x_{(2)}b,
\end{equation}
for all $x\in K$, $c\in C$ and $b\in B$. For the proof we introduce another left action of $K$ on $C\# B$, by
\begin{equation}\label{eq2left}
x(c\#b)=c\#xb;
\end{equation}
notice that if  $c\in C^K$, then
$$x(c\#b)=\sum x_{(1)}c\#x_{(2)}b=\sum c\#\varepsilon(x_{(1)})x_{(2)}b=c\#xb.$$
Note also that the right action of $K$ on $B$ is
$$(c\#b)x=c\#bx,$$
and it comes from the $B$-interior algebra structure.

We show that there is an isomorphism of right $K$-modules
 $$\Phi:C\#\overline{B}\longrightarrow k_{\varepsilon}\otimes_K C\# B, \qquad c\#\bar{b}\mapsto 1\otimes c\#b,$$
for any $c\in C$, $b\in B$. Here $C\#\overline{B}$ is a right $K$-module with the right action
 $$(c\#\bar{b})x=S(x)c\#\bar{b}$$ for any $c\in C$, $b\in B$ and $x\in K$. It is clear that this action is well defined, since $\overline{B}$ is a trivial right $K$-module.

Indeed, $\Phi$ is a well-defined map, because if  $\bar{b'}=\bar{b}\in \bar{B}$, then $$b-b'=\sum_{i\in I} x_ib''_i,$$ where $ x_i\in K^+$, $b_i''\in B$ for any $i\in I$ where $I$ is a finite set of indices; by using the equality  (\ref{eq2left}), we get that
$$1\otimes c\# b=1\otimes c\#(b'+\sum_{i\in I}x_ib''_i)=1\otimes c\# b'+\sum_{i\in I}1\otimes c\#x_ib_i''=1\otimes c\# b'.$$
Next, $\Phi$ is a homomorphism of right $K$-modules, because we have
\begin{align*}
\Phi((c\#\bar{b})x)&=\Phi(S(x)c\#\bar{b})=\Phi(S(x)c\#\overline{(\varepsilon(S(x)))^{-1}bx})\\&=1\otimes\varepsilon(S(x)))^{-1}S(x)c\#bx\\&=\varepsilon(\varepsilon(S(x)))^{-1}S(x))\otimes c\#bx
\\&=\varepsilon(S(x)))^{-1}\varepsilon(S(x)))\otimes c\#bx=1\otimes c\#bx
\\&=(1\otimes c\#b)x=\Phi(c\#\bar{b})x.
\end{align*}
Finally, it is easy to see that the map
$$\Psi:k_{\varepsilon}\otimes_KC\#B\rightarrow C\#\overline{B}, \qquad 1\otimes c\#b\mapsto c\#\bar{b},$$
is  the inverse of $\Phi$.

Since $\Phi$ is an isomorphism of $K$-modules, it restricts to an isomorphism
$$(C\#\overline{B})^K\cong (k_{\varepsilon}\otimes_KC\#B)^K.$$
of vector spaces between the  subspaces $K$-invariants. We have the obvious isomorphism
$$ (C\#B)^K\cong C^K\#\overline{B},$$
and from this we obtain an isomorphism, still denoted by $\Phi$, of vector spaces
$$\Phi:C^K\#\overline{B}\rightarrow (k_{\varepsilon}\otimes_KC\#B)^K.$$
This isomorphism allows us to make the  identification
$$(k_{\varepsilon}\otimes_KC\#B)^K=(k_{\varepsilon}\otimes_KC^K\#B)^K,$$
but but the left actions (\ref{eq1left}) and (\ref{eq2left}) coincide on $C^K\# B$, we deduce that $\Phi$ is the isomorphism between the vector spaces from the statement of the theorem. The fact that $\Phi$ is actually an homomorphism of $\overline{B}$-interior algebras follows by a straightforward verification.
\end{proof}

In the particular case of a group $G$ acting on the $k$-algebra $C$, the smash product $C\# kG$ is just the skew group algebra $C*G$, and we immediately deduce the following result.

\begin{corollary} Let $G$ be a finite group, let $\phi:G\to \bar{G}$ be an group epimorphism, and denote $K:=\Ker(\phi)$. If  $C$ is a $G$-algebra, then there is a isomorphism of $k\bar{G}$-interior algebras
\[C^K*\bar{G}\simeq (k\otimes_{kK}C*G)^K,\] mapping
$c*\bar{g}$ to $1\otimes c*g,$ for  any $c\in C^K$ and any $\bar{g}\in \bar{G},$ where $g\in G$ such that $\phi(g)=\bar{g}.$
\end{corollary}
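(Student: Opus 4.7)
The plan is to obtain this corollary as a direct specialization of Theorem~\ref{thmsurjPuTur} to the group algebra setting, taking $B = kG$ and the normal Hopf subalgebra to be $kK$. Since $K$ is a normal subgroup of $G$, its group algebra $kK$ is a normal Hopf subalgebra of $kG$, so the hypotheses of Theorem~\ref{thmsurjPuTur} are satisfied, and there is nothing to prove beyond translating the general conclusion into group-theoretic language.

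Next I would carry out the standard dictionary between group-algebra and Hopf-algebraic objects. The augmentation ideal $(kK)^+$ is the $k$-span of $\{x - 1 \mid x \in K\}$, so $(kG)(kK)^+$ is the kernel of the algebra map $kG \to k\bar{G}$ induced by $\phi$; this identifies $\overline{kG} := kG/(kG)(kK)^+ \cong k\bar{G}$ as Hopf algebras, compatibly with the canonical projection used in the theorem. By definition the smash product $C \# kG$ is the skew group algebra $C * G$. Finally, since the $kK$-action on $C$ is the $k$-linear extension of the $K$-action, the Hopf-theoretic invariants $C^{kK}$ coincide with the group-theoretic $C^K$. With these identifications, Definition~\ref{defnturullsurj} reads $\IndT_\phi(C) = C^K$, and Definition~\ref{defnsurj} reads $\IndP_\phi(C * G) = (k_\varepsilon \otimes_{kK} C * G)^K = (k \otimes_{kK} C * G)^K$, the trivial $kK$-module structure on $k$ being exactly that given by the augmentation. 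Substituting into the conclusion of Theorem~\ref{thmsurjPuTur} yields the claimed isomorphism $C^K * \bar{G} \cong (k \otimes_{kK} C * G)^K$ of $k\bar{G}$-interior algebras.

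To match the explicit formula, I would trace through the isomorphism $\Phi$ constructed in the proof of Theorem~\ref{thmsurjPuTur}, which sends $c \# \bar{b} \mapsto 1 \otimes c \# b$. In group-algebra notation this reads $c * \bar{g} \mapsto 1 \otimes c * g$ for $c \in C^K$ and any $g \in G$ with $\phi(g) = \bar{g}$, as asserted. Independence of the chosen lift $g$ follows from the well-definedness argument in the theorem, applied to the elements $h - 1 \in (kK)^+$ for $h \in K$. The main (mild) obstacle is to verify that the right $K$-action $(c \# \bar{b}) \cdot x = S(x)c \# \bar{b}$ employed in the proof of Theorem~\ref{thmsurjPuTur} specializes correctly with $S(h) = h^{-1}$, so that the resulting $K$-fixed subspace of $C * \bar{G}$ is indeed $C^K * \bar{G}$; once this compatibility is recorded, no further calculation is needed.
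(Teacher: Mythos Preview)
Your proposal is correct and follows the same approach as the paper: the corollary is obtained as an immediate specialization of Theorem~\ref{thmsurjPuTur} to the case $B=kG$, $K$ replaced by $kK$, using that $C\# kG$ is the skew group algebra $C*G$. If anything, you have spelled out the dictionary more carefully than the paper, which simply asserts that the result follows at once.
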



\end{document}